\newtheorem{theorem}{Theorem}[section]
\newtheorem{lemma}[theorem]{Lemma}
\renewcommand{\div}{{\rm div\, }}
\newcommand{\p}{\partial}
\newcommand{\R}{\mathbb{R}}
\renewcommand{\r}{\rho}
\newcommand{\te}{\theta}
\newcommand{\al}{\alpha}
\newcommand{\vp}{\varphi}
\newcommand{\ve}{\varepsilon}
\newcommand{\nb}{\nabla}
\newcommand{\B}{\mathbf{B}}
\newcommand{\bu}{\mathbf{u}}
\newcommand{\bn}{\mathbf{n}}
\def\e{\mathbf{e}}
\def\bv{\mathbf{v}}
\def\X{\mathbf{X}}
\def\Y{\mathbf{Y}}
\def\bU{\mathbf{U}}
\def\bt{\boldsymbol{\tau}}
\begin{document}

\title[Axisymmetric self-similar solutions to the MHD equations]{Axisymmetric self-similar solutions to the MHD equations without magnetic diffusion}

\author{Shaoheng Zhang} 
\address{School of Mathematical Sciences, Soochow University, Suzhou, 215006, China}
\email{20234007008@stu.suda.edu.cn}

\subjclass[2020]{35Q35, 76W05}
\keywords{Magnetohydrodynamics system, self-similar solution, Landau solution, no-slip boundary
condition}

\begin{abstract}
We study the axisymmetric self-similar solutions $(\mathbf{u},\mathbf{B})$ to the stationary MHD equations without magnetic diffusion, where $\mathbf{B}$ has only the swirl component.
Our first result states that in $\mathbb{R}^3\setminus\{0\}$, $\mathbf{u}$ is a Landau solution and $\mathbf{B}=0$.
Our second result proves the triviality of axisymmetric self-similar solutions in the half-space $\mathbb{R}^3_+$ with the no-slip boundary condition or the Navier slip boundary condition.
\end{abstract}

\maketitle

\section{Introduction}

In this paper, we study the stationary incompressible magnetohydrodynamics (MHD) equations without magnetic diffusion
\begin{align}
\label{MHD}
    \begin{cases}
    -\Delta \bu+(\bu \cdot \nb)\bu-(\B \cdot \nb)\B+\nb p =0,\\
    (\bu \cdot \nb) \B =(\B \cdot \nb) \bu,\\
    \div  \bu= \div  \B=0,
    \end{cases}
\end{align}    
in $\Omega \subseteq \R^3$.  Here, $\bu$ is the velocity field of the fluid, $\B$ is the magnetic field, and $p$ is the scalar pressure. The MHD equations describe the motion of electrically conducting fluids, such as plasmas, and are a coupled system of the Navier-Stokes equations and the Maxwell's equations. 

The MHD equations \eqref{MHD} are invariant under the following scaling:
\begin{align*}
    \bu(x)&\to \bu_{\lambda}(x)=\lambda \bu(\lambda x),\\
    \B(x)&\to \B_{\lambda}(x)=\lambda \B(\lambda x),\\
    p(x)&\to p_{\lambda}(x)=\lambda^2 p(\lambda x),
\end{align*}
where $\lambda>0$.
The solution $(\bu,\B,p)$ is called \emph{self-similar} if 
\[
\bu(x)=\bu_{\lambda}(x), \quad \B(x)=\B_{\lambda}(x),\quad p(x)=p_{\lambda}(x)
\] 
for any $\lambda>0$. In this paper, we aim to characterize self-similar solutions to \eqref{MHD} under the assumption of axisymmetry.

When $\B\equiv0$, the MHD equations \eqref{MHD} reduce to the Navier-Stokes equations.
The well-known self-similar solutions to the Navier-Stokes equations in $\R^3\setminus\{0\}$ are the Landau solutions (cf. \cite{CK04,Landau44,Squire51,TX98}).
The Landau solutions are axisymmetric without swirl.
The explicit formula of Landau solutions can be
found in many textbooks (see, e.g., Section 8.2 in \cite{Tsai18}), see also Section \ref{Sect2.2}.
Tian and Xin \cite{TX98}, Cannone and Karch \cite{CK04} proved that a self-similar solution to the Navier-Stokes equations in $\R^3\setminus\{0\}$ is a Landau solution if the solution is axisymmetric.
Recently, \v{S}ver\'ak \cite{Sverak11} proved the assertion without the assumption of axissymmetry.
Kang, Miura and Tsai \cite{KMT18} proved the triviality of axisymmetric self-similar solutions to the Navier-Stokes equations in $\R^3_+$ with the no-slip boundary condition or the Navier slip boundary condition.

Regarding the MHD equations, Zhang, Wang, and Wang investigated the stationary incompressible MHD equations with magnetic diffusion in \cite{ZWW25}. 
Assume that $(\bu,\B)$ is a smooth axisymmetric self-similar solution in $\R^3\setminus\{0\}$ and $\B=B^{\te}(\r,\vp)\e_{\te}$, it was proved that $\bu$ is a Landau solution and $\B$ vanishes provided $|\bu(x)|\le \ve|x|^{-1}$ for some small $\ve>0$.

Let $\bu$ be a Landau solution and $\B = 0$, it can be verified that $(\bu,\B)$ is an axisymmetric self-similar solution to \eqref{MHD} in $\R^3\setminus\{0\}$. 
Our first result shows that the converse statement is also true if $\B$ has only the swirl component.

\begin{theorem}
\label{thm1}
Let $(\bu,\B)$ be a smooth axisymmetric self-similar solution to \eqref{MHD} in $\R^3\setminus\{0\}$.
Assume that $\B=\frac{B(\vp)}{\r}\e_\te$, where $(\r,\te,\vp)$ are the spherical coordinates defined in Section \ref{Sect2.1}.
Then $\bu$ is a Landau solution and $\B=0$.
\end{theorem}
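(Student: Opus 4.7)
The plan is to exploit the induction equation as a rigid constraint linking $\bu$ and the scalar profile $B(\vp)$, then dichotomize into two cases handled separately.

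First, I would rewrite $(\bu\cdot\nb)\B=(\B\cdot\nb)\bu$ as $\nb\times(\bu\times\B)=0$, which follows from $\div\bu=\div\B=0$. Since $\R^3\setminus\{0\}$ is simply connected, $\bu\times\B=\nb\phi$ for some scalar $\phi$, and axisymmetry together with the self-similar scaling force $\phi=\Phi(\vp)/\r$. Writing $\bu=\r^{-1}(U^\r\e_\r+U^\vp\e_\vp+U^\te\e_\te)$ and $\B=(B(\vp)/\r)\e_\te$, a direct computation in spherical coordinates yields
\[
\bu\times\B=\frac{U^\vp B}{\r^2}\e_\r-\frac{U^\r B}{\r^2}\e_\vp,\qquad \nb\phi=-\frac{\Phi}{\r^2}\e_\r+\frac{\Phi'}{\r^2}\e_\vp,
\]
so $\Phi=-U^\vp B$ and $(U^\vp B)'=U^\r B$. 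Combining with $\div\bu=0$, i.e.\ $U^\r\sin\vp+(U^\vp\sin\vp)'=0$, and eliminating $U^\r$ gives $2U^{\vp\prime}B+U^\vp B'+U^\vp B\cot\vp=0$, which integrates on any open set where $U^\vp B\neq 0$ to the first integral
\[
(U^\vp)^2\, B\, \sin\vp \equiv C.
\]

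I would then show $C=0$ and deduce a dichotomy. Smoothness of $\B$ across the rotation axis forces $B(0)=B(\pi)=0$ (since $\e_\te$ is singular there), while $U^\vp$ remains bounded; sending $\vp\to 0^+$ on any connected component of $\{B\neq 0\}$ that touches the axis yields $C=0$. Since smooth self-similar solutions are real-analytic in $\vp$ (the reduced ODE system has analytic coefficients), the identity $(U^\vp)^2 B\sin\vp\equiv 0$ on $(0,\pi)$ forces the dichotomy: either $B\equiv 0$ or $U^\vp\equiv 0$.

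If $B\equiv 0$, the system reduces to the stationary Navier-Stokes equations for an axisymmetric self-similar $\bu$, and the cited theorems of Tian-Xin and Cannone-Karch deliver that $\bu$ is a Landau solution. If $U^\vp\equiv 0$, the divergence constraint forces $U^\r\equiv 0$, so $\bu=U^\te(\vp)/\r\,\e_\te$ is a pure swirl. Then the $\e_\te$-component of the momentum equation decouples from $\B$ (both $(\bu\cdot\nb)\bu$ and the Lorentz term $(\B\cdot\nb)\B$ lie in the $\e_\r$-$\e_\vp$ plane) and reduces to $\sin\vp(U^{\te\prime}\sin\vp)'=U^\te$; the substitution $V=U^\te/\sin\vp$, $x=\cos\vp$ converts this into $(1-x^2)V_{xx}-4xV_x-2V=0$, whose power-series solutions have the form $(c_0+c_1 x)/(1-x^2)$, singular at $x=\pm 1$ unless $c_0=c_1=0$, forcing $U^\te\equiv 0$. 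The $\e_\r$ and $\e_\vp$ components of momentum then give $(B^2-(U^\te)^2)'=-2(B^2-(U^\te)^2)\cot\vp$; with $U^\te\equiv 0$ this integrates to $B^2=c'/\sin^2\vp$, and axis regularity forces $c'=0$ and hence $B\equiv 0$. Thus $(\bu,\B)\equiv(0,0)$, the degenerate Landau solution.

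The main difficulty I anticipate is the passage from the local first integral $(U^\vp)^2 B\sin\vp=C$ on components of $\{U^\vp B\neq 0\}$ to a global identity on $(0,\pi)$: interior zeros of $B$ must be controlled via limits, and one must invoke analyticity of solutions to the reduced ODE system to rule out mixed patterns where $U^\vp$ and $B$ alternately vanish on adjacent arcs.
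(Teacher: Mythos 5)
Your reduction is sound up to the first integral: the identity $(U^\vp)^2B\sin\vp\equiv C$ with $C=0$ is exactly the paper's relation $Bg^2=0$ (obtained there from $(Bg^2\sin\vp)'=0$ with $g=U^\vp$), and your endgame computations in each branch of the dichotomy check out. The genuine gap is the dichotomy itself. From $Bg^2\equiv 0$ you conclude ``either $B\equiv0$ or $U^\vp\equiv0$ on all of $(0,\pi)$'' by asserting that smooth self-similar solutions are real-analytic in $\vp$. That assertion is not justified: the magnetic field obeys a pure transport equation with no diffusion, so no elliptic regularity is available for $B$, and the reduced system is degenerate exactly where you need it --- the induction equation reads $gB'=gB\cot\vp+2Bf$, which cannot be solved for $B'$ on the set $\{g=0\}$, so this is a differential-algebraic system to which analytic ODE theory does not apply. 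Smoothness of $(\bu,\B)$ therefore does not by itself rule out the ``mixed'' configurations you flag at the end, where $B$ and $U^\vp$ vanish on interlocking arcs.

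The gap is repairable, and the repair is the paper's argument: work locally instead of globally. If $B\not\equiv0$, take a maximal interval $(\al,\beta)\subseteq(0,\pi)$ on which $B\neq0$, with $B(\al)=0$; there $Bg^2=0$ forces $g\equiv0$, hence $f=-(g'+g\cot\vp)\equiv0$, and --- after first showing $h\equiv0$ on all of $(0,\pi)$ from $(h'+h\cot\vp)'=g(h'+h\cot\vp)$ together with $\int_0^\pi(h\sin\vp)'\,\mathrm{d}\vp=0$, a step your proposal only carries out under the global hypothesis $U^\vp\equiv0$ --- the $\e_\r$ and $\e_\vp$ momentum equations give $2P=B^2$ and $P'=-2P\cot\vp$ on $(\al,\beta)$, whence $B^2\sin^2\vp$ is constant there and equal to $0$ by $B(\al)=0$, a contradiction. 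This is precisely your ``pure swirl'' computation localized to $(\al,\beta)$, and no analyticity is needed. Aside from this, your use of $\nb\times(\bu\times\B)=0$ and a scalar potential is a clean repackaging of the paper's componentwise derivation of the same first integral, and citing Tian--Xin and Cannone--Karch for the case $B\equiv0$ is legitimate (the paper instead rederives the Landau solution explicitly).
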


Our second result is the following theorem, which proves the triviality of the axisymmetric self-similar solutions in the half-space $\mathbb{R}^3_+$ with the no-slip boundary condition or the Navier slip boundary condition.

\begin{theorem}
\label{thm2}
Let $(\bu,\B)$ be a smooth axisymmetric self-similar solution to \eqref{MHD} in $\R^3_+$, satisfying the no-slip boundary condition 
\begin{align}\label{noslipbc}
    \bu=0,  \quad\B\cdot \bn= 0, \quad &\text{on  }\ \p\R^3_+\setminus\{0\},
\end{align}
or the Navier slip boundary condition
\begin{equation}
\begin{aligned}\label{navierslipbc}
    \bu\cdot \bn=0, \quad (\mathbb{D}\bu \cdot \bn)\cdot \bt=0  , \quad\B\cdot \bn= 0, \quad &\text{on  }\ \p\R^3_+\setminus\{0\},
\end{aligned}
\end{equation}
where $\bn$ and $\bt$ are the unit outward normal and tangent vectors on $\p\R^3_+$, respectively, and $\mathbb{D}\bu=\frac{1}{2}(\nb \bu+\nb \bu^T)$  is the  strain tensor.
Assume that $\B=\frac{B(\vp)}{\r}\e_\te$.
Then $\bu=\B= 0$.
\end{theorem}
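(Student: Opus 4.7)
My plan is to first force $\B\equiv 0$ by combining a conservation law hidden in the induction equation with the boundary conditions, and then apply the Navier-Stokes result of Kang, Miura and Tsai \cite{KMT18} to the reduced problem.

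By self-similarity and axisymmetry, I would write
\[
\bu=\r^{-1}\bigl(U_\r(\vp)\e_\r+U_\vp(\vp)\e_\vp+U_\te(\vp)\e_\te\bigr),
\]
so that incompressibility becomes $U_\r=-(U_\vp\sin\vp)'/\sin\vp$. The $\e_\r$ and $\e_\vp$ components of $(\bu\cdot\nb)\B=(\B\cdot\nb)\bu$ are identically satisfied, while the $\e_\te$ component, after substituting the above expression for $U_\r$, reduces to
\[
U_\vp B'+2BU_\vp'+BU_\vp\cot\vp=0.
\]
Multiplying this identity by $U_\vp\sin\vp$ produces the exact derivative $(BU_\vp^2\sin\vp)'=0$. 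Since both \eqref{noslipbc} and \eqref{navierslipbc} include $\bu\cdot\bn=0$, we have $U_\vp(\pi/2)=0$, which forces the constant to vanish and yields $B(\vp)U_\vp(\vp)\equiv 0$ on $(0,\pi/2]$.

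The reduced self-similar system is a first-order ODE system in $\vp$ with real-analytic coefficients (rational in $\sin\vp$ and $\cos\vp$) and polynomial nonlinearities, so its smooth solutions are real-analytic on $(0,\pi/2)$. Two real-analytic functions whose product vanishes identically on a connected interval force at least one factor to vanish identically; hence either $B\equiv 0$, or $U_\vp\equiv 0$. If $B\equiv 0$, system \eqref{MHD} collapses to the axisymmetric self-similar stationary Navier-Stokes equations in $\R^3_+$ with the prescribed boundary condition, and the theorem of Kang-Miura-Tsai \cite{KMT18} gives $\bu=0$.

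If instead $U_\vp\equiv 0$, incompressibility forces $U_\r\equiv 0$, and the $\te$-momentum equation decouples to the linear ODE
\[
U_\te''+U_\te'\cot\vp-\frac{U_\te}{\sin^2\vp}=0,
\]
whose general solution is spanned by $\csc\vp$ and $\cot\vp$; the unique combination regular at the symmetry axis $\vp=0$ is $U_\te=C\tan(\vp/2)$. Since $U_\te(\pi/2)=C$ and $U_\te'(\pi/2)=C$, the no-slip condition $U_\te(\pi/2)=0$ and the Navier slip condition $U_\te'(\pi/2)=0$ each force $C=0$, and hence $U_\te\equiv 0$. Finally, eliminating the pressure from the $\r$- and $\vp$-components of the momentum equation (which in this case contain only centripetal and Lorentz terms) yields $(B^2-U_\te^2)'+2(B^2-U_\te^2)\cot\vp=0$, i.e., $B^2-U_\te^2=K/\sin^2\vp$; the smoothness of $B$ at the axis (where azimuthal components must vanish) forces $K=0$, so $B\equiv 0$ as well. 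The main obstacle I anticipate is the analyticity/unique-continuation step, which is what promotes the pointwise identity $BU_\vp\equiv 0$ into a clean dichotomy; once this is rigorously in hand, the remaining computations are routine ODE bookkeeping.
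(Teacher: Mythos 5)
Your reduction to the ODE system, the conservation law $(BU_\vp^2\sin\vp)'=0$, the use of $\bu\cdot\bn=0$ to kill the constant, and the final appeal to \cite{KMT18} all match the paper. The genuine gap is the step that promotes the pointwise identity $BU_\vp\equiv 0$ to the dichotomy ``$B\equiv 0$ or $U_\vp\equiv 0$'' via real-analyticity. The reduced system is \emph{not} an analytic ODE system in normal form: the induction equation reads $gB'=gB\cot\vp+2Bf$ (with $g=U_\vp$), which cannot be solved for $B'$ wherever $g$ vanishes, and the PDE itself has no magnetic diffusion, so there is no elliptic regularity forcing $B$ to be analytic. A smooth solution could a priori have $B\ne 0$ on some subintervals and $U_\vp\ne 0$ on others, with neither factor identically zero, and your argument does not exclude this configuration.

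The fix is local rather than global, and you already possess every ingredient for it: take a maximal interval $(\al,\beta)\subseteq(0,\tfrac\pi2]$ on which $B\ne 0$ with $B(\al)=0$ (either $\al>0$ by maximality, or $\al=0$ where regularity on the axis forces $B(0)=0$). There $BU_\vp^2=0$ gives $U_\vp\equiv 0$, incompressibility gives $U_\r\equiv 0$, and the swirl $U_\te$ is already zero (in the paper this is established first, from $H'=gH$ with $H=(h\sin\vp)'/\sin\vp$ and a zero of $H$ supplied by either boundary condition; your explicit $\tan(\vp/2)$ computation serves the same purpose but only in the global branch). The $\r$- and $\vp$-momentum equations then give exactly your relation $2P=B^2$ and $P'+2P\cot\vp=0$, so $P=K/\sin^2\vp$; now the constant is fixed not by regularity at the axis (which is unavailable if $\al>0$) but by $P(\al)=B(\al)^2/2=0$, yielding $B\equiv 0$ on $(\al,\beta)$, a contradiction. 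This is precisely the paper's argument, and it replaces the unique-continuation step you flagged as the main obstacle.
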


The paper is organized as follows. In Section 2, we recall basic definitions and introduce Landau solutions. Section 3 is devoted to the proof of Theorems \ref{thm1} and \ref{thm2}.

\section{Preliminaries}

\subsection{Axisymmetric solutions}\label{Sect2.1}
In this subsection, we introduce the axisymmetric vector fields.
Recall that the spherical coordinates of $\R^3$ are $(\r,\te,\vp)$ with 
\begin{equation*}
  (x_1, x_2, x_3) =(\r\sin\vp\cos \te,\r\sin\vp\sin \te, \r \cos \vp), 
\end{equation*} 
and the corresponding orthonormal basis vectors $\e_{\r},\e_{\te},\e_{\vp}$ are
\begin{align} \label{sphericalbasis}
\e_{\r}=\frac{x}{\r}, \quad \e_{\te}=(-\sin \te, \cos \te,0), \quad \e_{\vp}=\e_{\te}\times \e_{\rho}.
\end{align}
In this coordinate system, a vector field $\bv$ can be expressed as
\begin{equation*}
    \bv = v^\r(\r,\te,\vp) \e_\r + v^\te(\r,\te,\vp) \e_\te + v^\vp(\r,\te,\vp) \e_\vp,  
\end{equation*}
with scalar components $v^\r,v^\te,v^\vp$, and we call $v^\te$ its swirl component.
A vector field $\bv$ is called \emph{axisymmetric}, 
if it is of the form
\[
\bv = v^\r(\r,\vp) \e_\r + v^\te(\r,\vp) \e_\te + v^\vp(\r,\vp) \e_\vp. 
\]

\subsection{Landau solutions to the Navier-Stokes equations}
\label{Sect2.2}
For each $b\in \R^3$, there exists a unique
$(-1)$-homogeneous solution $\bU^b$ to the Navier-Stokes equations, together with an associated $(-2)$-homogeneous pressure $P^b$, such that $\bU^b, P^b$ are smooth in $\R^3 \setminus \{0\}$ and satisfy
\begin{align*}
-\Delta \bU^b + \div(\bU^b \otimes \bU^b) + \nb P^b =b\delta, 
\quad \div \bU^b = 0,
\end{align*}
in the distributional sense in $\R^3$, where $\delta$ denotes the Dirac function. 
If $b=(0,0,\beta)$ with $\beta \ne 0$, the solutions $\bU^b$ and $P^b$ are given explicitly by
\begin{align}\label{Landausolutions}
    \bU^b=\frac{2}{\rho} 
    \left[
    \left(\frac{a^2-1}{(a-\cos \vp)^2}-1\right)\e_{\rho}
    +\frac{-\sin \vp}{a-\cos \vp}\e_{\vp}
    \right],
    \quad
    P^b=\frac{4(a \cos \vp-1)}{\rho^2 (a-\cos \vp)^2},
\end{align}
where $\e_\r,\e_{\te},\e_{\vp}$ are defined in \eqref{sphericalbasis} and $|a|>1$.
The relationship between $\beta \neq 0$ and $|a|>1$ is as follows
\[
\beta = 16 \pi 
\left[a + \frac 12 a^2 \log \left(\frac {a-1}{a+1}\right) + \frac{4a}{3(a^2-1)}\right].
\]
The corresponding Landau solution for a general $b$ can be obtained by rotation.

\section{Proof of the main Results}

In this section, we will prove Theorems \ref{thm1}--\ref{thm2}. 
The strategy follows that for the derivation of Landau solutions (cf. \cite[Theorem 8.1]{Tsai18}).
Under the assumption of axisymmetry, the MHD equations for a self-similar solution reduce to a system of ODEs, which can be solved explicitly.


\begin{proof}[\textbf{Proof of Theorem \ref{thm1}}]
In spherical coordinates $(\r,\vp,\te)$, the axisymmetric self-similar solution $(\bu,\B,p)$ is of the form 
\[
\bu=\frac{f(\vp)}{\r}\e_\r+\frac{g(\vp)}{\r}\e_\vp+\frac{h(\vp)}{\r}\e_\te,\quad 
\B=\frac{B(\vp)}{\r}\e_{\te},\quad 
p=\frac{P(\vp)}{\r^2}.
\]
Using Appendix 2 in \cite{Batchelor99}, the system \eqref{MHD} reduces to the following ODEs  (see Lemma \ref{lemmaA2} in Appendix A for an alternative derivation using the Riemannian connection)
\begin{align}\label{theorem1-1}
    f''+f' \cot \vp = f'g-(f^2+g^2+h^2)+B^2-2P,
\end{align}
\begin{align}\label{theorem1-2}
    f'=g'g-h^2 \cot \vp +B^2 \cot \vp +P',
\end{align}
\begin{align}\label{theorem1-3}
    (h'+h \cot \vp)'=g(h'+h \cot \vp),
\end{align}
\begin{align}\label{theorem1-4}
    0=gB'-gB \cot \vp-2Bf,
\end{align}
\begin{align}\label{theorem1-5}
    0=f+g'+g \cot \vp,
\end{align}
for $\vp \in (0, \pi)$, together with the following boundary conditions
\begin{align}
\label{theorem1-6}
    f'(0)=g(0)=h(0)=B(0)=f'(\pi)=g(\pi)=h(\pi)=B(\pi)=0,
\end{align}
due to symmetry and regularity of $\bu$ and $\B$.

Define  $H(\vp):=h'(\vp)+h(\vp)\cot \vp=\frac{(h \sin\vp)'}{\sin \vp}$. Equation \eqref{theorem1-3} reduces to $H'=gH$. 
There exists $\vp_1\in (0,\pi)$ such that $H(\vp_1)=0$, since 
\[
\int_0^{\pi} H(\vp)\sin \vp\ \mathrm{d}\vp 
= h(\vp)\sin \vp \big|_0^{\pi} =0.
\]
Then by uniqueness of solutions, $H$ is identically zero. Therefore $h(\vp)\sin \vp$ is a constant, which must be $0$ due to the boundary condition. 
\textbf{Hence, we conclude that $\boldsymbol{h(\vp)\equiv0}$}. 

Substituting \eqref{theorem1-5} into \eqref{theorem1-4}, we get
\[
0=(Bg)'+Bg'+Bg\cot \vp.
\]
Multiplying the above equation by $g\sin \vp$ gives
\[
0=(Bg^2\sin \vp)'.
\]
The boundary conditions $g(0)=B(0)=0$ imply that 
\begin{align}\label{theorem1-7}
Bg^2=0, \quad \vp\in  (0,\pi).
\end{align}
\textbf{We claim that $\boldsymbol{ B(\vp)\equiv0}$ }. Otherwise, there exists $\vp_2\in (0,\pi)$ such that $B(\vp_2)\neq0$. 
By the regularity of $B$ and the boundary condition, there exists a maximal interval $(\alpha,\beta)\subseteq (0,\pi)$ such that $B\neq0$ on $(\al,\beta)$ and $B(\alpha)=B(\beta)=0$. Equations \eqref{theorem1-7} and \eqref{theorem1-5} show $g=0$ and $f=0$ on $(\al,\beta)$. Then we have
\[
B^2-2P=0, \quad B^2 \cot\vp+P'=0,\quad \vp \in (\al,\beta),
\]
by \eqref{theorem1-1} and \eqref{theorem1-2}. 
Direct calculation shows $P=\frac{C_1}{\sin^2\vp}$ on $(\al,\beta)$ for some constant $C_1$. Since $P(\alpha)=\frac{B(\al)^2}{2}=0$, we have $C_1=0$. Hence, $B=0$ on $(\al,\beta)$, which gives the contradiction.

The derivation of $f$ and $g$ is the same as that of Landau solutions (see, e.g., \cite[Theorem 8.1]{Tsai18}).
Integrating \eqref{theorem1-2} gives
\begin{align}\label{theorem1-8}
    f=\frac{g^2}{2}+P+C_2
\end{align}
for some $C_2$. Substituting \eqref{theorem1-8} into \eqref{theorem1-1}, we get
\begin{align}\label{theorem1-9}
    f''+f'\cot\vp=gf'-f^2-2f+2C_2.
\end{align}
Set $J(\vp):=f(\vp)\sin \vp$, and $K(\vp):=g(\vp)\sin \vp$. Equation \eqref{theorem1-5} yields $J=-K'$. Thus \eqref{theorem1-9} becomes
\[
(f'\sin \vp)'=(Kf)'+2K'+2C_2\sin \vp.
\]
Integrating the above equation, we get
\[
f'\sin \vp=Kf+2K-2C_2\cos \vp +C_3.
\]
By the boundary conditions \eqref{theorem1-6}, we get $0=\pm2C_2+C_3$. Thus $C_2=C_3=0$ and 
\begin{align}\label{theorem1-10}
    f'\sin \vp=Kf+2K.
\end{align}
Let $L(t)=K(\vp)$ with $t=\cos \vp$.
Direct computation gives $K'(\vp)=-L'(t)\sin \vp$, which yields $f(\vp)=L'(t)$ by using $J=-K'$.
Therefore we have
\begin{align*}
   &(1-t^2)L''+2L+LL'=0, \quad \text{for} \quad t\in (-1,1),\\
    &L(-1)=L(1)=0.
\end{align*}
Set $L(t)=(1-t^2)w(t)$. Then $w$ satisfies
\[
w'+\frac{w^2}{2}=0.
\]
Thus $w(t)=\frac{2}{t-a}$ for some $|a|>1(a\in \R)$, and 
\[
L(t)=\frac{2(1-t^2)}{t-a}=K(\vp)=\frac{2\sin^2\vp}{\cos \vp-a}.
\]
Thus we have
\[
f(\vp)=\frac{2(a^2-1)}{(a-\cos\vp)^2}-2, \quad g(\vp)=\frac{2\sin \vp}{\cos \vp-a}, \quad
h(\vp)= B(\vp)=0.
\]
Equation \eqref{theorem1-8} shows $P=f-\frac {g^2}{2}$.
Hence, the solution $\bu$ is exactly a Landau solution, as introduced in Section \ref{Sect2.2}.
\end{proof}

\begin{proof}[\textbf{Proof of Theorem \ref{thm2}}]
The axisymmetric self-similar solution $(\bu,\B,p)$ is of the form 
\[
\bu=\frac{f(\vp)}{\r}\e_\r+\frac{g(\vp)}{\r}\e_\vp+\frac{h(\vp)}{\r}\e_\te,\quad 
\B=\frac{B(\vp)}{\r}\e_{\te},\quad 
p=\frac{P(\vp)}{\r^2}.
\]
Then the system \eqref{MHD} becomes
\begin{align}\label{theorem2-1}
    f''+f' \cot \vp = f'g-(f^2+g^2+h^2)+B^2-2P,
\end{align}
\begin{align}\label{theorem2-2}
    f'=g'g-h^2 \cot \vp +B^2 \cot \vp +P',
\end{align}
\begin{align}\label{theorem2-3}
    (h'+h \cot \vp)'=g(h'+h \cot \vp),
\end{align}
\begin{align}\label{theorem2-4}
    0=gB'-gB \cot \vp-2Bf,
\end{align}
\begin{align}\label{theorem2-5}
    0=f+g'+g \cot \vp,
\end{align}
for $\vp \in (0,\frac\pi2)$. 
Due to symmetry, we have
\begin{align}
\label{theorem2-6}
    f'(0)=g(0)=h(0)=B(0)=0.
\end{align}
Note that the boundary conditions \eqref{noslipbc} and \eqref{navierslipbc} become
\begin{align}\label{bdc1}
f(\frac\pi2)=g(\frac\pi2)=h(\frac\pi2)=0,
\end{align}
and 
\begin{align}\label{bdc2}
f'(\frac\pi2)=g(\frac\pi2)=h'(\frac\pi2)=0,
\end{align}
respectively.

Define  $H(\vp):=h'+h\cot \vp=\frac{(h\sin\vp)'}{\sin \vp}$. Then equation \eqref{theorem2-3} becomes $H'=gH$. 
In the case of the no-slip boundary condition, equations \eqref{theorem2-6} and \eqref{bdc1} imply that
\[
\int_0^{\frac{\pi}{2}}H\sin\vp\, \mathrm{d}\vp
=\int_0^{\frac{\pi}{2}}(h\sin\vp)'\, \mathrm{d}\vp=0,
\]
which gives the existence of $\vp_3\in (0,\frac\pi2)$ such that $H(\vp_3)=0$.
In the case of the Navier slip boundary condition,
\[
H(\frac \pi 2)=h'(\frac \pi 2)=0.
\]
In both cases, $H$ is identically zero because of the uniqueness of solutions to an ODE. 
Hence, $h(\vp)\sin \vp$ is a constant, which must be $0$ by the boundary condition $h(0)=0$. 
\textbf{We conclude that $\boldsymbol{h(\vp)\equiv0}$}. 

Equations \eqref{theorem2-4} and \eqref{theorem2-5} show
\[
(Bg^2\sin \vp)'=0.
\]
Combining with $g(0)=B(0)=0$ gives
\[
Bg^2=0, \quad  \vp \in (0,\frac\pi2).
\]
Similarly, \textbf{we claim $\boldsymbol{B(\vp)\equiv0}$}. Otherwise there exists $\vp_4\in (0,\frac\pi2]$ such that $B(\vp_4)\ne0$. Let $(\al,\beta)$ be the maximal interval containing $\varphi_4$ where $B \neq 0$ and $B(\alpha) = 0$.
Following the same arguments above, equations \eqref{theorem2-1},\eqref{theorem2-2} and \eqref{theorem2-5} imply
\[
B^2-2P=0,\quad B^2\cot\vp+P'=0, \quad \vp\in(\al,\beta).
\]
The boundary condition $P(\al)=0$ and the ODE $P'+2P\cot\vp=0$ yield $P=0$. Then $B=0$ on $(\al,\beta)$, which gives the contradiction. 

Therefore the system becomes
\begin{align*}
    -\Delta \bu+(\bu \cdot \nb)\bu+\nb p =0,\ \div  \bu=0, \quad \text{in} \quad \R^3_+,
\end{align*} 
satisfying the no-slip boundary condition
\begin{align*}
 \bu=0,  \quad &\text{on} \quad \p\R^3_+\setminus\{0\},
\end{align*}
or the Navier slip boundary condition
\begin{align*}
    \bu\cdot \bn=0, \quad (\mathbb{D}\bu \cdot \bn)\cdot \bt=0  , \quad &\text{on  }\ \p\R^3_+\setminus\{0\}.
\end{align*}
According to \cite[Theorem 5.1]{KMT18}, $\bu=0$, which completes the proof of Theorem \ref{thm2}.
\end{proof}

\section*{Acknowledgements}
The author would like to thank Professors Yun Wang and Kui Wang for helpful discussions, encouragements, and supports.
The author was supported by the Postgraduate Research \& Practice Innovation Program of Jiangsu Province via grant KYCX24\_3285.

\appendix

\section{}
In this section, we give an alternative derivation of equations \eqref{theorem1-1}--\eqref{theorem1-5} using the Riemannian connection.
Let $\mathfrak{X}(\R^3)$ be the set of smooth vector fields on $\R^3$. 
Then the Riemannian connection $\nb:\mathfrak{X}(\R^3)\times \mathfrak{X}(\R^3) \to \mathfrak{X}(\R^3)$, denoted by $(\X,\Y)\mapsto \nb_\X \Y$, satisfies the following properties:
\begin{enumerate}[(i)]
    \item $\nb_{(f_1\X_1+f_2\X_2)}\Y=f_1(\nb_{\X_1}\Y)+f_2(\nb_{\X_2}\Y)$;
    \item  $\nb_\X{(f_1\Y_1+f_2\Y_2)}
    =\X(f_1)\Y_1+f_1 (\nb_{\X}\Y_1)+\X(f_2)\Y_2+f_2 (\nb_{\X}\Y_2)$;
    \item $\X\langle \Y_1,\Y_2\rangle=\langle \nb_\X \Y_1,\Y_2 \rangle+\langle \Y_1,\nb_\X \Y_2 \rangle$;
    \item $\nb_\X\Y-\nb_\Y \X=[\X,\Y]$;
\end{enumerate}
for all $\X,\Y,\X_1,\X_2,\Y_1,\Y_2\in \mathfrak{X}(\R^3)$ and $f_1,f_2\in C^{\infty}(\R^3)$. Here $\langle\cdot ,\cdot  \rangle$ is the standard Euclidean metric, and  $[\cdot ,\cdot ]$ is the Lie bracket. 

Using the notations in Section \ref{Sect2.1}, the canonical Euclidean metric is $g_{\R^3}=\mathrm{d}\r^2+\r^2\ \mathrm{d} \vp^2+\r^2 \sin^2\vp\ \mathrm{d}\te^2$, and $\e_{\r}=\frac{\p}{\p \r}, \e_{\vp}=\frac{1}{\r}\frac{\p}{\p \vp}, \e_{\te}=\frac{1}{\r \sin \vp}\frac{\p}{\p \te}$.

\begin{lemma}\label{lemmaA}
Let $\nb$ be the Riemannian connection on $\R^3$. For any $\bv=v^\r(\r,\vp,\te)\e_\r+v^\vp(\r,\vp,\te)\e_\vp+v^\te(\r,\vp,\te)\e_\te\in \mathfrak{X}(\R^3)$, we have
\begin{align}\label{lemmaA1-1}
\left\{
\begin{aligned}
&\nb_{\e_\r}\e_{\r}=\nb_{\e_\r}\e_{\vp}=\nb_{\e_\r}\e_{\te}=0,\\
&\nb_{\e_\vp}\e_{\r}=\frac1\r\, \e_{\vp},\  
\nb_{\e_\vp}\e_{\vp}=-\frac1\r\, \e_{\r},\ 
\nb_{\e_\vp}\e_{\te}=0,\\
&\nb_{\e_\te}\e_{\r}=\frac1\r\, \e_{\te},\
\nb_{\e_\te}\e_{\vp}= \frac{\cot \vp}{\r}\, \e_{\te},\
\nb_{\e_\te}\e_{\te}=
-\frac1\r\, \e_{\r}-\frac{\cot \vp}{\r}\, \e_{\vp},
\end{aligned}
\right.
\end{align}
and
\begin{align}\label{lemmaA1-2}
\left\{
\begin{aligned}
\nb_{\e_\r}\bv
=&\frac{\p v^\r}{\p \r}\e_\r
+\frac{\p v^\vp}{\p \r}\e_\vp
+\frac{\p v^\te}{\p \r}\e_\te,\\
\nb_{\e_\vp}\bv
=&\frac1\r \left( \frac{\p v^\r}{\p \vp}-v^\vp \right) \e_\r
+\frac1\r \left( v^\r+ \frac{\p v^\vp}{\p \vp} \right) \e_\vp
+\frac1\r \frac{\p v^\te}{\p \vp}\e_\te,\\
\nb_{\e_\te}\bv
=&\frac1\r \left( \frac{1}{\sin \vp}\frac{\p v^\r}{\p \te}-v^\te \right) \e_\r
+\frac1\r \left( \frac{1}{\sin \vp}\frac{\p v^\vp}{\p \te} -\cot \vp v^\te \right) \e_\vp\\
&+\frac1\r \left(v^{\r}+ \cot \vp v^{\vp} + \frac{1}{\sin \vp} \frac{\p v^\te}{\p \te} \right)\e_\te.
\end{aligned}
\right.
\end{align}
Here $\e_\r,\e_\te$ and $\e_\vp$ are defined in \eqref{sphericalbasis}.
\end{lemma}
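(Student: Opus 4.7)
The plan is to deduce (A.1.1) from the four axioms (i)--(iv) via the Koszul identity specialized to the orthonormal frame $\{\e_\r,\e_\vp,\e_\te\}$, and then obtain (A.1.2) as an immediate consequence of the Leibniz rule (ii). The essential input is the Lie brackets of the frame vector fields; once those are in hand, the connection coefficients drop out of metric compatibility (iii) combined with torsion-freeness (iv).

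First I would compute the Lie brackets. Since $\e_\r=\p_\r$, $\e_\vp=\r^{-1}\p_\vp$, and $\e_\te=(\r\sin\vp)^{-1}\p_\te$, applying these as derivations on a smooth test function and invoking axiom (iv) yields
\[
[\e_\r,\e_\vp]=-\tfrac{1}{\r}\e_\vp,\qquad
[\e_\r,\e_\te]=-\tfrac{1}{\r}\e_\te,\qquad
[\e_\vp,\e_\te]=-\tfrac{\cot\vp}{\r}\e_\te.
\]
Next, because $\langle\e_i,\e_j\rangle=\delta_{ij}$ is constant, axiom (iii) forces the first three terms of the usual Koszul identity to vanish, leaving on the orthonormal frame
\[
2\langle\nb_{\e_i}\e_j,\e_k\rangle=\langle[\e_i,\e_j],\e_k\rangle-\langle[\e_i,\e_k],\e_j\rangle-\langle[\e_j,\e_k],\e_i\rangle.
\]
Substituting the three brackets above and writing $\nb_{\e_i}\e_j=\sum_k\langle\nb_{\e_i}\e_j,\e_k\rangle\e_k$ then recovers all nine formulas in (A.1.1); in particular $\nb_{\e_\r}\e_j=0$ for every $j$, since every nonzero bracket involving $\e_\r$ lies orthogonal to $\e_\r$.

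For (A.1.2), write $\bv=v^\r\e_\r+v^\vp\e_\vp+v^\te\e_\te$ and apply axioms (i)--(ii) to obtain
\[
\nb_{\e_i}\bv=\sum_{j\in\{\r,\vp,\te\}}\bigl(\e_i(v^j)\,\e_j+v^j\,\nb_{\e_i}\e_j\bigr).
\]
The derivatives $\e_i(v^j)$ are read off from the coordinate expressions $\e_\r=\p_\r$, $\e_\vp=\r^{-1}\p_\vp$, $\e_\te=(\r\sin\vp)^{-1}\p_\te$, while the $\nb_{\e_i}\e_j$ terms are taken from (A.1.1); collecting components along $\e_\r,\e_\vp,\e_\te$ produces (A.1.2). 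The only real obstacle is bookkeeping: keeping careful track of signs in the specialized Koszul formula and of which Lie brackets vanish. Once the three non-trivial brackets have been recorded, the remainder is mechanical algebra.
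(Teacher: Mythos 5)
Your proof is correct, but it reaches \eqref{lemmaA1-1} by a genuinely different route than the paper. The paper exploits the flatness of $\R^3$ directly: since $\nb_{\p_{x_i}}\p_{x_j}=0$, one has $\nb_\X\Y=(\X(Y^1),\X(Y^2),\X(Y^3))$ in Cartesian components, so each $\nb_{\e_i}\e_j$ is obtained by simply differentiating the explicit trigonometric expressions in \eqref{sphericalbasis}. You instead compute the three Lie brackets $[\e_\r,\e_\vp]=-\frac1\r\e_\vp$, $[\e_\r,\e_\te]=-\frac1\r\e_\te$, $[\e_\vp,\e_\te]=-\frac{\cot\vp}{\r}\e_\te$ and feed them into the Koszul identity reduced to an orthonormal frame; I checked these brackets and the resulting nine connection coefficients, and they all match \eqref{lemmaA1-1}. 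Your method is intrinsic and would work verbatim for any orthonormal frame on any Riemannian manifold, at the cost of the Koszul bookkeeping; the paper's method is shorter here because it leans on the Euclidean structure. Two small imprecisions to fix: the Lie brackets are computed from the coordinate expressions of the frame alone and do not require axiom (iv) --- torsion-freeness enters only when you derive the Koszul identity from (iii) and (iv); and your one-line justification that $\nb_{\e_\r}\e_j=0$ ``since every nonzero bracket involving $\e_\r$ lies orthogonal to $\e_\r$'' only disposes of the third Koszul term --- you also need the cancellation $\langle[\e_\r,\e_j],\e_k\rangle=\langle[\e_\r,\e_k],\e_j\rangle$ for $j,k\in\{\vp,\te\}$, which does hold here but deserves a word. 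The derivation of \eqref{lemmaA1-2} from the Leibniz rule (ii) together with \eqref{lemmaA1-1} is the same in both proofs.
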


\begin{proof}
We first prove equation \eqref{lemmaA1-1}. For any $\Y=(Y^1,Y^2,Y^3)=\sum_{i=1}^3Y^i \frac{\p}{\p x_i}\in \mathfrak{X}(\R^3)$, we have
\begin{align}\label{lemmaA1-3}
    \nb_\X \Y=(\X(Y^1),\X(Y^2),\X(Y^3)).
\end{align}
Here we used the properties (i) and (ii) of the Riemannian connection and the fact that $\nb_{\frac{\p}{\p x_i}} \frac{\p}{\p x_j}=0$ on $\R^3$ (see, e.g., \cite[p. 56]{dC92}). The proof of \eqref{lemmaA1-1} follows directly from \eqref{lemmaA1-3}, and we only prove the case $\nb_{\e_{\te}} \e_{\kappa}\ (\kappa=\r,\vp,\te)$. Using \eqref{lemmaA1-3}, we have
\begin{align*}
    \nb_{\e_\te}\e_\r
    =&\frac{1}{\r \sin \vp}
    \left(
    \p_{\te}(\sin \vp \cos \te), 
    \p_{\te}(\sin \vp \sin \te), 
    \p_{\te}(\cos \vp) 
    \right)
    =\frac{1}{\r}\, \e_\te, \\
    \nb_{\e_\te}\e_\vp
    =&\frac{1}{\r \sin \vp}
    \left(
    \p_{\te}(\cos \vp \cos \te),
    \p_{\te}(\cos \vp \sin \te),
    \p_{\te}(-\sin \vp)
    \right)
    =\frac{1}{\r}\cot \vp\, \e_\te, \\
    \nb_{\e_\te}\e_\te
    =&\frac{1}{\r \sin \vp}
    \left(
    \p_{\te}(-\sin \te), \p_{\te}(\cos \te), 0
    \right)
    = -\frac{1}{\r}\, \e_\r-\frac{1}{\r} \cot \vp\, \e_\vp .
\end{align*}
The proof of equation \eqref{lemmaA1-2} relies on \eqref{lemmaA1-1}, and we just give a proof for the case $\nb_{\e_{\te}}\bv$. In this case, direct calculation gives
\begin{align*}
    \nb_{\e_{\te}} \bv 
    =& (\nb_{\e_{\te}} v^{\r}) \e_{\r} + v^{\r}(\nb_{\e_{\te}} \e_{\r})
    + (\nb_{\e_{\te}} v^{\vp}) \e_{\vp} + v^{\vp}(\nb_{\e_{\te}} \e_{\vp})
    + (\nb_{\e_{\te}} v^{\te}) \e_{\te} + v^{\te}(\nb_{\e_{\te}} \e_{\te})\\
    =& \frac1\r \left( \frac{1}{\sin \vp}\frac{\p v^\r}{\p \te}-v^\te \right) \e_\r
    +\frac1\r \left( \frac{1}{\sin \vp}\frac{\p v^\vp}{\p \te} -\cot \vp v^\te \right) \e_\vp\\
    &+\frac1\r \left(v^{\r}+ \cot \vp v^{\vp} + \frac{1}{\sin \vp} \frac{\p v^\te}{\p \te} \right)\e_\te.
\end{align*}
The first equality follows from the property (ii) of the Riemannian connection, and we used \eqref{lemmaA1-1} in the second equality.
\end{proof}

The next lemma gives the Laplacian, divergence, and other quantities related to the MHD equations in spherical coordinates. First, we recall some definitions. If $\bv$ is a vector field and $p$ is a scalar function, the Laplacian and the divergence of $\bv$, and the gradient of $p$ are given by (cf. \cite[pp. 12--18]{CLN06})
\begin{align} \label{geometricdefi}
\Delta \bv=\sum_i \left( \nb_{\e_i} \nb_{\e_i} \bv -\nb_{\nb_{\e_i} \e_i} \bv  \right), \quad
\div \bv=\sum_i \langle \nb_{\e_i} \bv,\e_i  \rangle,\quad 
\nb p= \sum_i \e_i(p)\, \e_i,        
\end{align}
where $\{\e_i\}$ is an orthonormal basis and $\langle\cdot,\cdot \rangle$ denotes the standard Euclidean metric.

\begin{lemma}
\label{lemmaA2}
Let $\displaystyle \bu=\frac{f(\vp)}{\r}\e_\r+\frac{g(\vp)}{\r}\e_\vp+\frac{h(\vp)}{\r}\e_\te$ with $f,g,h\in C^2([0,\pi])$.
\begin{enumerate}[(1)]
\item The Laplacian and the divergence of $\bu$ are given by
\begin{align*}
\Delta \bu=&\frac{1}{\r^3}
\Big( \big(f''+f'\cot\vp-2(f+g'+g\cot\vp)\big)\,\e_\r
+(2f+g'+g\cot\vp)'\,\e_{\vp}\\
&+ (h'+h\cot\vp)'\,\e_{\te}
\Big),
    \end{align*}
    and 
    \begin{align*}
        \div \bu=\frac{1}{\r^2}(f+g'+g\cot\vp).
    \end{align*}
\item For any $\B=\frac{B(\vp)}{\r}\,\e_\te$ with $B\in C^1([0,\pi])$, we have
    \begin{align*}
    (\bu\cdot\nb)\bu=\frac{1}{\r^3} 
    \big(
    (f'g-f^2-g^2-h^2)\,\e_{\r}
    +(g'g-h^2\cot \vp)\,\e_{\vp}
    +g(h'+h\cot\vp)\,\e_{\te}
    \Big),
    \end{align*}
    and 
    \begin{align*}
        &(\bu\cdot \nb)\B=\frac{1}{\r^3}
        \big(
        -Bh \,\e_\r-Bh \cot \vp\, \e_{\vp}+(B'g-Bf)\,\e_{\te}
        \big),\\
        &(\B\cdot \nb)\bu=\frac{1}{\r^3}
        \big(
        -Bh\, \e_\r-Bh \cot \vp\, \e_{\vp}+(Bf+Bg\cot\vp)\,\e_{\te}
        \big).
    \end{align*}
    For any $P\in C^1([0,\pi])$, the gradient of $p=\frac{P(\vp)}{\r^2}$ is
    \begin{align*}
        \nb p=\frac{1}{\r^3}(-2P\,\e_\r+P'\,\e_{\vp}).
    \end{align*}
\end{enumerate}
Here $f'=\frac{\mathrm d}{\mathrm d \vp}f(\vp)$, $f''=\frac{\mathrm d^2}{\mathrm d \vp^2}f(\vp)$.
\end{lemma}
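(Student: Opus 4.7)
The plan is to apply Lemma \ref{lemmaA} systematically, exploiting that the ansatz is axisymmetric (so $\p_\te$ annihilates all scalar components) and homogeneous in $\r$ (so $\p_\r$ acts only on the fixed prefactors $1/\r$ or $1/\r^2$). With these two features built in, every item in the lemma reduces to a finite combination of $\vp$-derivatives of $f,g,h,B,P$ times an appropriate negative power of $\r$, and the task is to match the resulting expression against the stated formula.

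I would dispatch the easier items first. For $\div\bu=\sum_i\langle\nb_{\e_i}\bu,\e_i\rangle$, I would read off the three diagonal entries from \eqref{lemmaA1-2} specialized to $\bu$ and sum. For $\nb p$, I would use $\e_\r(p)=-2P/\r^3$, $\e_\vp(p)=P'/\r^3$, $\e_\te(p)=0$. For the convective terms I would use linearity in the first slot,
\[
(\bu\cdot\nb)\bu \;=\; \frac{f}{\r}\,\nb_{\e_\r}\bu \;+\; \frac{g}{\r}\,\nb_{\e_\vp}\bu \;+\; \frac{h}{\r}\,\nb_{\e_\te}\bu,
\]
together with the analogous expansions for $(\bu\cdot\nb)\B$ and $(\B\cdot\nb)\bu$; substituting \eqref{lemmaA1-2}, all $\p_\te$-terms vanish by axisymmetry and collecting components by $\e_\r,\e_\vp,\e_\te$ yields the claimed expressions. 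Since $\B$ has only a swirl component, $(\B\cdot\nb)\bu=(B/\r)\nb_{\e_\te}\bu$ collapses to a single line.

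The longest piece is the vector Laplacian. Using $\Delta\bu=\sum_i\bigl(\nb_{\e_i}\nb_{\e_i}\bu - \nb_{\nb_{\e_i}\e_i}\bu\bigr)$, I would proceed in two stages. First, I would apply \eqref{lemmaA1-2} with $\bv=\bu$ to produce the three first-order fields $\bW_i:=\nb_{\e_i}\bu$ as explicit vector fields of homogeneity $\r^{-2}$. Second, I would apply \eqref{lemmaA1-2} again with $\bv=\bW_i$ to obtain $\nb_{\e_i}\bW_i$. From \eqref{lemmaA1-1}, $\nb_{\e_\r}\e_\r=0$ (no correction for $i=\r$), while $\nb_{\e_\vp}\e_\vp=-\e_\r/\r$ and $\nb_{\e_\te}\e_\te=-\e_\r/\r-(\cot\vp/\r)\e_\vp$; the correction terms are thus $(1/\r)\bW_\r$ and $(1/\r)\bW_\r+(\cot\vp/\r)\bW_\vp$, both already at hand. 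Summing the three second-order contributions and subtracting the corrections, the overall $\r^{-3}$ prefactor comes out cleanly, and the scalar components should reorganize into the stated formula.

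The main obstacle will be this last reorganization. The cross-terms in \eqref{lemmaA1-2} (the $-v^\vp$, $-v^\te$, and $\cot\vp\,v^\vp$ mixings) proliferate under a second covariant differentiation, and the combinations that need to emerge are $h'+h\cot\vp$ and $f+g'+g\cot\vp$, which package the scattered contributions into the announced form. Axisymmetry is indispensable throughout: every $\p_\te$-term vanishes, which is precisely what allows the final answer to collapse to a function of $\vp$ alone multiplied by a fixed power of $\r$.
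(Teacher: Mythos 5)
Your proposal is correct and follows essentially the same route as the paper: both compute $\Delta\bu$ via $\sum_i(\nb_{\e_i}\nb_{\e_i}\bu-\nb_{\nb_{\e_i}\e_i}\bu)$ by iterating the formulas of Lemma \ref{lemmaA} (with the corrections coming only from $\nb_{\e_\vp}\e_\vp$ and $\nb_{\e_\te}\e_\te$), and both obtain the divergence, gradient, and convective terms by expanding $\nb_\X\Y$ linearly in the first slot and reading off components from \eqref{lemmaA1-2}, with axisymmetry killing all $\p_\te$ terms. The only caution is the sign bookkeeping in your phrase ``subtracting the corrections'': since $\nb_{\e_\vp}\e_\vp=-\e_\r/\r$, the terms $-\nb_{\nb_{\e_i}\e_i}\bu$ enter with a plus sign, i.e.\ as $+\frac{2}{\r}\nb_{\e_\r}\bu+\frac{\cot\vp}{\r}\nb_{\e_\vp}\bu$, exactly as in the paper.
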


\begin{proof}
(1) By virtue of Lemma \ref{lemmaA}, the Laplacian of $\bu$ becomes
\begin{align*}
    \Delta \bu
    =&\nb_{\e_{\r}}( \nb_{\e_{\r}} \bu)+ \nb_{\e_{\vp}} (\nb_{\e_{\vp}} \bu) + \nb_{\e_{\te}} (\nb_{\e_{\te}} \bu)    +\frac{2}{\r} \nb_{\e_{\r}} \bu + \frac1\r \cot\vp \nb_{\e_{\vp}}\bu\\
    =& \nb_{\e_{\r}}( \nb_{\e_{\r}} \bu)+ \nb_{\e_{\vp}} (\nb_{\e_{\vp}} \bu) + \nb_{\e_{\te}} (\nb_{\e_{\te}} \bu)\\ 
    &-\frac{2}{\r^3}(f\,\e_{\r}+g\, \e_{\vp} +h \,\e_{\te})
    +\frac{\cot \vp}{\r^3}\big(
    (f'-g)\,\e_{\r}+(f+g')\,\e_{\vp}+h'\,\e_{\te}
    \big).
\end{align*}
Direct calculations show
\[
\nb_{\e_{\r}}( \nb_{\e_{\r}} \bu)
=\frac{2}{\r^3}(f\,\e_{\r}+g\, \e_{\vp} +h\, \e_{\te}).
\]
Similarly, one can prove that
\[
\nb_{\e_{\vp}} (\nb_{\e_{\vp}} \bu)
=\frac{1}{\r^3}\big( (f''-2g'-f)\,\e_{\r}+(g''+2f'-g)\,\e_{\vp}+h''\,\e_{\te}
\big),
\]
and
\[
\nb_{\e_{\te}} (\nb_{\e_{\te}} \bu)
=\frac{1}{\r^3}\big(
-(f+g \cot\vp)\,\e_{\r}-(f\cot\vp+g\cot^2\vp)\, \e_{\vp}-h \csc^2 \vp\, \e_{\te}
\big).
\]
Hence, we have
\begin{align*}
\Delta \bu
=&\frac{1}{\r^3}
\Big( (f''+f'\cot\vp-2(f+g'+g\cot\vp))\,\e_\r
+(2f+g'+g\cot\vp)'\,\e_{\vp}\\
&+ (h'+h\cot\vp)'\,\e_{\te}
\Big).
\end{align*}
Similarly, equations \eqref{lemmaA1-2} and \eqref{geometricdefi} give
\begin{align*}
    \div \bu=& 
    \frac{\p u^\r}{\p \r}
    +\frac1\r \left( u^\r+ \frac{\p u^\vp}{\p \vp} \right)
    +\frac1\r \left(u^{\r}+ \cot \vp u^{\vp} + \frac{1}{\sin \vp} \frac{\p u^\te}{\p \te} \right)\\
    =& \frac{1}{\r^2}(f+g'+g\cot\vp).
\end{align*}
(2) For any vector fields $\X$ and $\Y$, by virtue of equation \eqref{lemmaA1-3}, we have $(\X\cdot \nb)\Y=\nb_\X \Y$. Using property (ii) of the Riemannian connection, the proof follows from \eqref{lemmaA1-2}. We focus on proving the case $(\bu\cdot \nb)\bu$. Direct calculation yields
\begin{align*}
   (\bu\cdot \nb)\bu=&\nb_\bu \bu
   =\frac{f}{\r}\nb_{\e_{\r}}\bu + \frac{g}{\r}\nb_{\e_{\vp}}\bu+\frac{h}{\r}\nb_{\e_{\te}}\bu\\
   =& \frac{1}{\r^3} \Big(
   (f'g-(f^2+g^2+h^2))\,\e_{\r}
   +(g'g-h^2\cot \vp)\,\e_{\vp}
   +g(h'+h\cot\vp)\,\e_{\te}
   \Big).
\end{align*}
The gradient of $p$ follows directly from \eqref{geometricdefi}.
\end{proof}

\bibliographystyle{alpha}
\bibliography{ref}

\begin{thebibliography}{ZWW25}

\bibitem[Bat99]{Batchelor99}
G.~K. Batchelor.
\newblock {\em An introduction to fluid dynamics}.
\newblock Cambridge Mathematical Library. Cambridge University Press,
  Cambridge, paperback edition, 1999.

\bibitem[CK04]{CK04}
Marco Cannone and Grzegorz Karch.
\newblock Smooth or singular solutions to the {N}avier-{S}tokes system?
\newblock {\em J. Differential Equations}, 197(2):247--274, 2004.

\bibitem[CLN06]{CLN06}
Bennett Chow, Peng Lu, and Lei Ni.
\newblock {\em Hamilton's {R}icci flow}, volume~77 of {\em Graduate Studies in
  Mathematics}.
\newblock American Mathematical Society, Providence, RI; Science Press Beijing,
  New York, 2006.

\bibitem[dC92]{dC92}
Manfredo Perdig\~ao do~Carmo.
\newblock {\em Riemannian geometry}.
\newblock Mathematics: Theory \& Applications. Birkh\"auser Boston, Inc.,
  Boston, MA, portuguese edition, 1992.

\bibitem[KMT18]{KMT18}
Kyungkeun Kang, Hideyuki Miura, and Tai-Peng Tsai.
\newblock Green tensor of the {S}tokes system and asymptotics of stationary
  {N}avier-{S}tokes flows in the half space.
\newblock {\em Adv. Math.}, 323:326--366, 2018.

\bibitem[Lan44]{Landau44}
L.~Landau.
\newblock A new exact solution of {N}avier-{S}tokes equations.
\newblock {\em C. R. (Doklady) Acad. Sci. URSS (N.S.)}, 43:286--288, 1944.

\bibitem[Squ51]{Squire51}
H.~B. Squire.
\newblock The round laminar jet.
\newblock {\em Quart. J. Mech. Appl. Math.}, 4:321--329, 1951.

\bibitem[Tsa18]{Tsai18}
Tai-Peng Tsai.
\newblock {\em Lectures on {N}avier-{S}tokes equations}, volume 192 of {\em
  Graduate Studies in Mathematics}.
\newblock American Mathematical Society, Providence, RI, 2018.

\bibitem[TX98]{TX98}
Gang Tian and Zhouping Xin.
\newblock One-point singular solutions to the {N}avier-{S}tokes equations.
\newblock {\em Topol. Methods Nonlinear Anal.}, 11(1):135--145, 1998.

\bibitem[\v{S}11]{Sverak11}
V.~\v{S}ver\'ak.
\newblock On {L}andau's solutions of the {N}avier-{S}tokes equations.
\newblock volume 179, pages 208--228. 2011.
\newblock Problems in mathematical analysis. No. 61.

\bibitem[ZWW25]{ZWW25}
Shaoheng Zhang, Kui Wang, and Yun Wang.
\newblock Point singularities of solutions to the stationary incompressible
  {MHD} equations.
\newblock {\em arXiv:2504.09607}, 2025.

\end{thebibliography}

\end{document}